\def\BState{\State\hskip-\ALG@thistlm}
\newtheorem{theorem}{Theorem}
\newcommand{\inp}{\mathrm{I}}
\newcommand{\out}{\mathrm{O}}
\newcommand\bbR{\mathbb{R}}
\newcommand\bbC{\mathbb{C}}
\newcommand\bbS{\mathbb{S}}
\newcommand{\eff}{\mathrm{eff}}
\newcommand{\test}{\mathrm{test}}
\newcommand\NN{\mathrm{NN}}
\newcommand\eps{\epsilon}
\title{SwitchNet: a neural network model for forward and inverse scattering problems}
\author{Yuehaw Khoo}
\address{Department of Mathematics, Stanford University, Stanford, CA 94305. }
\email{ykhoo@stanford.edu}
\author{Lexing Ying}
\address{Department of Mathematics and ICME, Stanford University, Stanford, CA 94305.
  Facebook AI Research, Menlo Park, CA 94025.
}
\email{lexing@stanford.edu}
\begin{document}

\begin{abstract}
  We propose a novel neural network architecture, SwitchNet, for solving the wave equation based
  inverse scattering problems via providing maps between the scatterers and the scattered field
  (and vice versa). The main difficulty of using a neural network for this problem is that a
  scatterer has a global impact on the scattered wave field, rendering typical convolutional
  neural network with local connections inapplicable. While it is possible to deal with such a
  problem using a fully connected network, the number of parameters grows quadratically with the
  size of the input and output data. By leveraging the inherent low-rank structure of the scattering
  problems and introducing a novel switching layer with sparse connections, the SwitchNet
  architecture uses much fewer parameters and facilitates the training process.  Numerical
  experiments show promising accuracy in learning the forward and inverse maps between the
  scatterers and the scattered wave field.
\end{abstract}

\maketitle
\section{Introduction}

In this paper, we study the forward and inverse scattering problems via the use of artificial
neural networks (NNs). In order to simplify the discussion, we focus on the time-harmonic acoustic
scattering in two dimensional space. The inhomogeneous media scattering problem with a fixed
frequency $\omega$ is modeled by the Helmholtz operator
\begin{equation}
  \label{helmholtz 0}
  Lu := \bigg(-\Delta - \frac{\omega^2}{c^2(x)} \bigg) u,
\end{equation}
where $c(x)$ is the velocity field. In many settings, there exists a known background velocity field
$c_0(x)$ such that $c(x)$ is identical to $c_0(x)$ except in a compact domain $\Omega$. By
introducing the {\em scatterer} $\eta(x)$ compactly supported in $\Omega$
\begin{equation}
\label{eta def}
\frac{\omega^2}{c(x)^2} =  \frac{\omega^2}{c_0(x)^2} + \eta(x),
\end{equation}
we can equivalently work with $\eta(x)$ instead of $c(x)$. Note that in this definition $\eta(x)$
scales quadratically with the frequency $\omega$. However, as $\omega$ is assumed to be fixed
throughout this paper, this scaling does not affect any discussion below.


In many real-world applications, $\eta(\cdot)$ is unknown. The task of the inverse problem is to
recover $\eta(\cdot)$ based on some observation data $d(\cdot)$. The observation data $d(\cdot)$ is
often a quantity derived from the Green's function $G=L^{-1}$ of the Helmholtz operator $L$ and,
therefore, it depends closely on $\eta(\cdot)$. This paper is an exploratory attempt of constructing
efficient approximations to the forward map $\eta \rightarrow d$ and the inverse map
$d\rightarrow\eta$ using the modern tools from machine learning and artificial intelligence. Such
approximations are highly useful for the numerical solutions of the scattering problems: an
efficient map $\eta\rightarrow d$ provides an alternative to expensive partial differential equation
(PDE) solvers for the Helmholtz equation; an efficient map $d \rightarrow \eta$
is more valuable as it allows us to solve the inverse problem of determining the scatterers from the
scattering field, without going through the usual iterative process.

In the last several years, deep neural network has become the go-to method in computer vision, image
processing, speech recognition and many other machine learning applications \cite{lecun2015deep,
  schmidhuber2015deep, hinton2006reducing, goodfellow2016deep}. More recently, methods based on NN
have also been applied to solving PDEs. Based on the way that the NN is used, these methods for
solving the PDE can be roughly separated into two different categories. For the methods in the first
category
\cite{lagaris1998artificial,rudd2015constrained,carleo2017solving,han2018solving,Khoo2018,weinan2018deep},
instead of specifying the solution space via the choice of basis (as in finite element method or
Fourier spectral method), NN is used for representing the solution. Then an optimization problem,
for example an variational formulation, is solved in order to obtain the parameters of the NN and
hence the solution to the PDE.  Similar to the use of an NN for regression and classification
purposes, the methods in the second category such as
\cite{long2017pde,han2017deep,khoo2017solving,fan2018multiscale} use an NN to learn a map that goes
from the coefficients in the PDE to the solution of the PDE. As in machine learning, the
architecture design of an NN for solving PDE usually requires the incorporation of the knowledge
from the PDE domain such that the NN architecture is able to capture the behavior of the solution
process. Despite the abundance of the works in using the NN for solving PDE, none of the above
mentioned methods have tried to obtain the solution to the wave equation.

This paper takes a deep learning approach to learn both the forward and inverse maps. For the
Helmholtz operator \eqref{helmholtz 0}, we propose an NN architecture for determining the forward
and inverse maps between the scatterer $\eta(\cdot)$ and the observation data $d(\cdot)$ generated
from the scatterer. Although this task looks similar to the computer vision problems such as image
segmentation, denoising, and super-resolution where the map between the two images has to be
determined, the nature of the map in our problem is much more complicated. In many image processing
tasks, the value of a pixel at the output generally only depends on a neighborhood of that pixel at
the input layer. However, for the scattering problems, the input and output are often defined on
different domains and, due to wave propagation, each location of the scatterer can influence every
point of the scattered field. Therefore, the connectivity in the NN has to be wired in a non-local
fashion, rendering typical NN with local connectivity insufficient. This leads to the development of
the proposed {\em SwitchNet}. The key idea is the inclusion of a novel low-complexity {\em switch
  layer} that sends information between all pairs of sites effectively, following the ideas from
butterfly factorizations \cite{li2015butterfly}. The same factorization was used earlier in the
architecture proposed \cite{li2018butterfly}, but the network weights there are hardcoded and not
trainable.

The paper is organized as followed. In Section \ref{section: prelim}, we discuss about some
preliminary results concerning Helmholtz equation. In Section \ref{section: NN archi}, we study the
so called far field pattern of the scattering problem, where the sources and receivers can be
regarded as placed at infinity. We propose SwitchNet to determine the maps between the far field
scattering pattern and the scatterer. In Section \ref{section: NN seis}, we turn to the setting of a
seismic imaging problem. In this problem, the sources and receivers are at a finite distance, but
yet well-separated from  the scatterer.



\section{Preliminary} \label{section: prelim}

The discussion of this paper shall focus on the two-dimensional case. Here, we summarize the
mathematical tools and notations used in this paper. As mentioned above, the scatterer $\eta(x)$ is
compactly supported in a domain $\Omega$, whose diameter is of $O(1)$. For example, one can think of
$\Omega$ to be the unit square centered at the origin. In \eqref{helmholtz 0}, the Helmholtz
operator is defined on the whole space $\bbR^2$ with the radiative (Sommerfeld) boundary condition
\cite{Colton2013} specified at infinity. Since the scatterer field $\eta(x)$ is localized in
$\Omega$, it is convenient to truncate the computation domain to $\Omega$ by imposing the
\emph{perfectly matched layer} \cite{berenger1994perfectly} that approximates the radiative boundary
condition.

In a typical numerical solution of the Helmholtz operator, $\Omega$ is discretized by a Cartesian
grid $X\subset\Omega$ at the rate of a few points per wavelength. As a result, the number of grid
points $N$ per dimension is proportional to the frequency $\omega$. We simply use $\{x\}_{x\in X}$
to denote the discretization points of this $N\times N$ grid $X$. The Laplacian operator $-\Delta$
in the Helmholtz operator is typically discretized with local numerical schemes, such as the finite
difference method \cite{Larsson2009}. Via this discretization, we can consider the scatterer field
$\eta$, discretized at the points in $X$, as a vector in $\mathbb{R}^{N^2}$ and the Helmholtz
operator $L$ as a matrix in $\mathbb{C}^{N^2\times N^2}$.


Using the background velocity field $c_0(x)$, we first introduce the background Helmholtz operator
$L_0 = -\Delta - \omega^2/c_0^2$. With the help of $L_0$, one can write $L$ in a perturbative way as
\begin{equation}  \label{helmholtz}
  L = L_0 - E, \quad
  E = \text{diag}(\eta),
\end{equation}
where $E$ is viewed as a perturbation. By introducing the background Green's function
\begin{equation}
  G_0 := L_0^{-1},
\end{equation}
one can write down a formal expansion for the Green's function $G=L^{-1}$ of the $\eta$-dependent
Helmholtz operator $L$:
\begin{eqnarray}
  G & = & (L_0(I-G_0E))^{-1}\cr
  & \sim & (I + G_0E + G_0E G_0E + \cdots) G_0 \cr
  & \sim & G_0 + G_0EG_0 + G_0E  G_0EG_0 + \cdots \cr
  &:=& G_0 + G_1 + G_2 + \cdots,
\end{eqnarray}
which is valid when the scatterer field $\eta(x)$ is sufficiently small. The last line of the above
equation serves as the definition of the successive terms of the expansion ($G_1$, $G_2$, and so
on). As $G_0$ can be computed from the knowledge of the background velocity field $c_0(x)$, most
data gathering processes (with appropriate post-processing) focus on the difference $G-G_0 = G_1 +
G_2 +\cdots$ instead of $G$ itself.

A usual experimental setup consists of a set of {\em sources} $S$ and a set of {\em receivers} $R$:
\[
S = \{s\}_{s\in S},\quad R = \{r\}_{r\in R}.
\]
The data gathering process usually involves three steps: (1) impose an external force or incoming
wave field via some sources, (2) solve for the scattering field either computationally or
physically, (3) gather the data with receivers at specific locations or directions. The second step
is modeled by the difference of the Green's function $G-G_0$, as we mentioned above. As for the
other steps, it is convenient at this point to model the first step with a source-dependent operator
$\Pi_S$ and the third one with a receiver-dependent operator $\Pi_R$. We shall see later how these
operators are defined in more concrete settings. By putting these components together, one can set
the observation data $d$ abstractly as
\begin{equation}\label{ddefined}
  d = \Pi_R (G-G_0) \Pi_S =
  \Pi_R (G_0EG_0 + G_0E  G_0EG_0 + \cdots) \Pi_S =
  (\Pi_R G_0) (E + EG_0E + \cdots) (G_0 \Pi_S).
\end{equation}



In this paper, we focus on two scenarios: far field pattern and seismic imaging. We start with far
field pattern first to motivate and introduce SwitchNet. We then move on to the seismic case by
focusing on the main differences.



\section{SwitchNet for far field pattern}\label{section: NN archi}

\subsection{Problem setup}\label{section:FFsetup}

In this section, we consider the problem of determining the map from the scatterer to the far field
scattering pattern, along with its inverse map. Without loss of generality, we assume that the
diameter of the domain $\Omega$ is of $O(1)$ after appropriate rescaling. The background velocity
$c_0(x)$ is assumed to be 1 since the far field pattern experiments are mostly performed in free
space.

In this problem, both the sources and the receivers are indexed by a set of unit directions in
$\bbS^1$. The source associated with a unit direction $s\in S\subset\bbS^1$ is an incoming plane
waves $e^{i\omega s\cdot x}$ pointing at direction $s$. It is well known that the scattered wave
field, denoted by $u_s(x)$, at a large distance takes the following form \cite{Colton2013}
\[
u_s(x) =\frac{e^{i\omega |x|}}{\sqrt{|x|}} \left( u^\infty_s\left(\frac{x}{\vert x \vert}\right) + o(1) \right),
\]
where the function $u^\infty_s(\cdot)$ is defined on the unit circle $\bbS^1$. The receiver at
direction $r\in R\subset \bbS^1$ simply records the quantity $u^\infty_s(r)$ for each $s$. The set of
observation data $d$ is then defined to be
\[
d(r s) = u^\infty_s(r).
\]
Figure \ref{fig:far} provides an illustration of this experimental setup. Henceforth, we assume that
both $R$ and $S$ are chosen to be a set of uniformly distributed directions on $\bbS^1$. Their size,
denoted by $M$, typically scales linearly with frequency $\omega$.

\begin{figure}[!ht]
  \centering
  \includegraphics[height=0.25\textwidth]{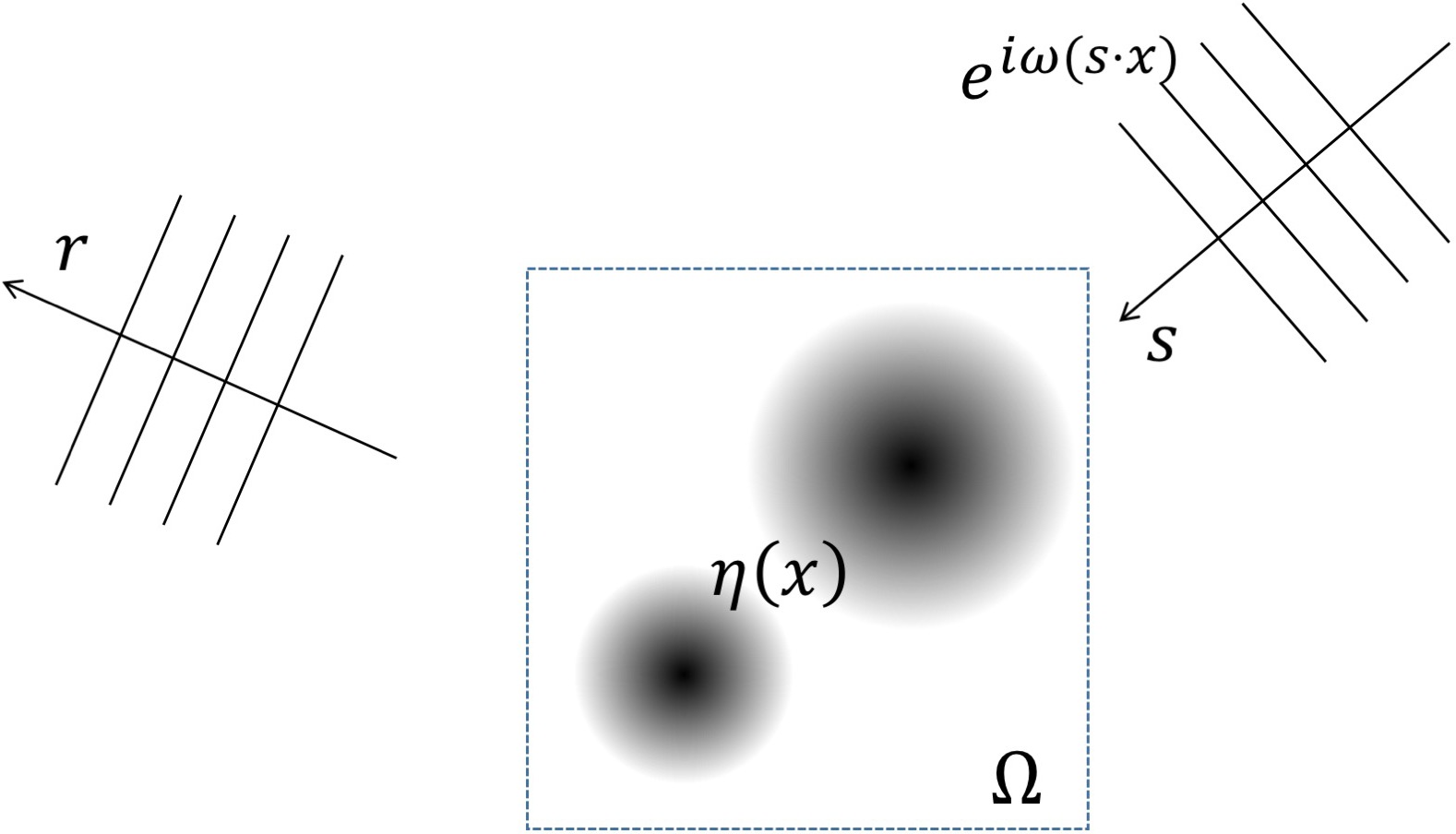}
  \caption{Illustration of the incoming and outgoing waves for a far field pattern problem.  The
    scatterer $\eta(x)$ is compactly supported in the domain $\Omega$. The incoming plane wave
    points at direction $s$. The far field pattern is sampled at each receiver direction $r$.
  }\label{fig:far}
\end{figure}

This data gathering process can be put into the framework of \eqref{ddefined}. First, one can think
of the source prescription as a limiting process that produces in the limit the incoming wave
$e^{i\omega s\cdot x}$. The source can be considered to be located at the point $-s\rho$ for the
direction $s\in\bbS^1$ with the distance $\rho\in\bbR^+$ going to infinity. In order to compensate
the geometric spreading of the wave field and also the phase shift, the source magnitude is assumed
to scale like $\sqrt{\rho} e^{-i\omega \rho}$ as $\rho$ goes to infinity. Under this setup, we have
\begin{eqnarray}\label{slimit}
  &\ &\lim_{\rho\rightarrow \infty} (G_0 \Pi_S)(x, s)\cr
  &=& \lim_{\rho\rightarrow \infty} (1/\sqrt{\rho}) e^{i\omega |x-(-s\rho)|} \sqrt{\rho} e^{-i\omega \rho}\cr
  &=& \lim_{\rho\rightarrow \infty} (1/\sqrt{\rho}) e^{i\omega (\rho + s\cdot x)} \sqrt{\rho} e^{-i\omega \rho}\cr
  &=& e^{ i\omega s\cdot x}.
\end{eqnarray}
Similarly, one can also regard the receiver prescription as a limiting process as well. The receiver
is located at point $r\rho'$ for a fixed unit direction $r\in\bbS^1$ with $\rho'\in\bbR^+$ going to
infinity. Again in order to to compensate the geometric spreading and the phase shift, one scales
the received signal with $\sqrt{\rho'} e^{-i\omega \rho'}$. As a result, we have
\begin{eqnarray}\label{rlimit}
  &\ &\lim_{\rho'\rightarrow \infty} (\Pi_R G_0)(r, x)\cr
  &=& \lim_{\rho'\rightarrow \infty} (1/\sqrt{\rho'}) e^{i\omega |r\rho' - x|} \sqrt{\rho'} e^{-i\omega \rho'}\cr
  &=& \lim_{\rho'\rightarrow \infty}  (1/\sqrt{\rho'}) e^{i\omega (\rho' - r\cdot x)} \sqrt{\rho'} e^{-i\omega \rho'}\cr
  &=& e^{-i\omega r\cdot x}.
\end{eqnarray}
In this limiting setting, one redefine the observation data as 
\begin{equation}
  d =\underset{\rho,\rho'\rightarrow \infty}\lim (\Pi_R G_0) (E+EG_0E+\cdots) (G_0\Pi_S).
\end{equation}
Now taking the two limits \eqref{slimit} and \eqref{rlimit} under consideration, one arrives at the
following representation of the observation data $d(r,s)$ for $r\in R$ and $s\in S$
\begin{equation}
\label{perturbation series}
d(r,s) = \sum_{x\in X} \sum_{y\in X} e^{-i\omega r\cdot x} (E+EG_0E+\cdots)(x,y) e^{i\omega s\cdot y}.
\end{equation}

\subsection{Low-rank property} \label{section: FIO}

The intuition behind the proposed NN architecture comes from examining \eqref{perturbation series}
when $E$ (or $\eta$) is small. In such a situation, we simply retain the term that is linear in $E$.
Using the fact that $E=\text{diag}(\eta)$, \eqref{perturbation series} becomes
\[
d(r,s) \approx  \sum_{x\in X} e^{i\omega (s-r)\cdot x} \eta(x)
\]
for $r\in R\subset \mathbb{S}^1$ and $s\in S\subset \mathbb{S}^1$.  This linear map takes $\eta(x)$
defined on a Cartesian grid $X\subset \Omega$ to $d(r,s)$ defined on yet another Cartesian grid
$R\times S\subset\mathbb{S}^1\times\mathbb{S}^1$.  Recalling that both $R$ and $S$ are of size $M$
and working with a vectorized $d\in \mathbb{C}^{M^2}$, we can write the above equation compactly as
\begin{equation}
  d \approx A\eta,
\end{equation}
where the element of the matrix $A\in\mathbb{C}^{M^2 \times N^2}$ at $(r,s)\in R\times S$ and $x\in
X$ is given by
\begin{equation}
  A(rs,x) = \exp(i\omega(s-r)\cdot x).
\end{equation}

The following theorem concerning the matrix $A$ plays a key role in the design of our NN. Let us
first partition $\Omega$ uniformly into $\sqrt{P_X}\times\sqrt{P_X}$ Cartesian squares of
side-length equal to $1/\sqrt{\omega}$. Here we assume that $\sqrt{P_X}$ is an integer. Note that,
since the diameter of $\Omega$ is of $O(1)$, $\sqrt{P_X} \approx \sqrt{\omega}$. This naturally
partitions the set of grid points $X$ into $P_X$ subgroups depending on which square each point
belongs to. We shall denote these subgroups by $X_0,\ldots, X_{P_X-1}$. Similarly, we also partition
$\mathbb{S}^1\times\mathbb{S}^1$ uniformly (in the angular parameterization) into
$\sqrt{P_D}\times\sqrt{P_D}$ squares $D_0,\ldots,D_{P_D-1}$ of side-length equal to
$1/\sqrt{\omega}$. $\sqrt{P_D}$ is also assumed to be an integer and obviously $\sqrt{P_D}\approx
\sqrt{\omega}$. This further partitions the set $R\times S$ into $P_D$ subgroups depending on which
square they belong to. We shall denote these subgroups by $D_0,\ldots,
D_{P_D-1}$. Fig. \ref{fig:thm1} illustrates the partition for $\sqrt{P_X}=\sqrt{P_D}=4$.

\begin{figure}[h]
  \centering \includegraphics[width=0.6\textwidth]{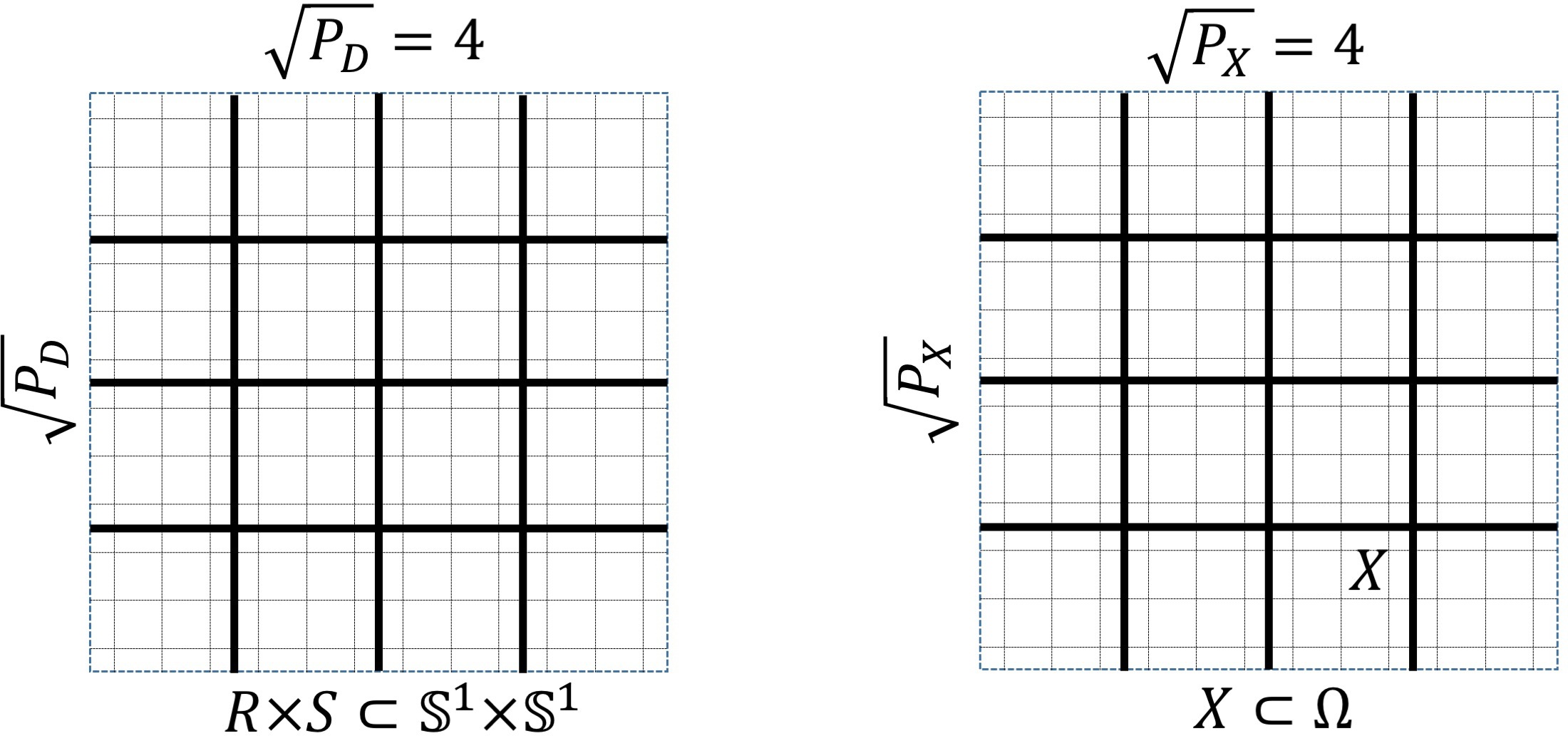}
  \caption{ Illustration of the partitions used in Theorem \ref{theorem: low-rank}. The fine grids
    stand for the Cartesian grids $X$ and $R\times S$. The bold lines are the boundary of the
    squares of the partitions.  }\label{fig:thm1}
\end{figure}

\begin{theorem}
\label{theorem: low-rank}
For any $D_i$ and $X_j$, the submatrix
\begin{equation}\label{Aij}
  A_{ij} := [A(rs,x)]_{(r,s)\in D_i, x\in X_j}
\end{equation}
is numerically low-rank.
\end{theorem}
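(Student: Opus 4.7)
The plan is to perform a directional low-rank expansion in the spirit of butterfly / complementary low-rank factorizations. Let $(r_0,s_0)$ denote the center of $D_i$ and $x_0$ the center of $X_j$, and write $r=r_0+\delta_r$, $s=s_0+\delta_s$, $x=x_0+\delta_x$. By construction of the partition, $|\delta_r|,|\delta_s|,|\delta_x|\lesssim 1/\sqrt{\omega}$. The idea is to split the phase $\omega(s-r)\cdot x$ into pieces that separate the $(r,s)$ and $x$ dependence, up to a single small cross term.

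First I would carry out the algebraic decomposition
\begin{equation*}
\omega(s-r)\cdot x \;=\; \omega(s_0-r_0)\cdot x_0 \;+\; \omega(s_0-r_0)\cdot\delta_x \;+\; \omega(\delta_s-\delta_r)\cdot x_0 \;+\; \omega(\delta_s-\delta_r)\cdot\delta_x.
\end{equation*}
The first three terms exponentiate to a product of three factors: a constant, a function of $x$ only, and a function of $(r,s)$ only. So $A(rs,x)$ factors as
\begin{equation*}
A(rs,x) \;=\; \alpha\,\varphi(r,s)\,\psi(x)\,\exp\!\bigl(i\omega(\delta_s-\delta_r)\cdot\delta_x\bigr),
\end{equation*}
and the first three factors contribute a rank-one piece. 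The whole question reduces to approximating the remaining exponential $\exp(i\omega(\delta_s-\delta_r)\cdot\delta_x)$ by a low-rank sum.

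The key quantitative observation is that
\begin{equation*}
\bigl|\omega(\delta_s-\delta_r)\cdot\delta_x\bigr|\;\lesssim\;\omega\cdot\tfrac{1}{\sqrt{\omega}}\cdot\tfrac{1}{\sqrt{\omega}}\;=\;O(1),
\end{equation*}
so the argument of the cross-term exponential is uniformly bounded on $D_i\times X_j$, \emph{independently} of $\omega$. I would then Taylor expand
\begin{equation*}
\exp\!\bigl(i\omega(\delta_s-\delta_r)\cdot\delta_x\bigr) \;=\; \sum_{k=0}^{K}\frac{(i\omega)^k}{k!}\bigl[(\delta_s-\delta_r)\cdot\delta_x\bigr]^k \;+\; O(\epsilon),
\end{equation*}
where, by the remainder estimate for the exponential at a bounded argument, $K=O(\log(1/\epsilon))$ suffices. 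Applying the binomial/multinomial theorem in the two spatial coordinates to each power expands $[(\delta_s-\delta_r)\cdot\delta_x]^k$ into $O(k)$ products of a function of $(\delta_s,\delta_r)$ and a function of $\delta_x$. Summing over $k\le K$ produces an approximation of $A_{ij}$ with error $\epsilon$ whose rank is $O(K^2)=O(\log^2(1/\epsilon))$, which is what ``numerically low-rank'' means.

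The only genuinely delicate step is the cross-term bound: the whole argument rests on the geometric balance that the side-length $1/\sqrt{\omega}$ is chosen precisely so that the product of side-lengths times $\omega$ is $O(1)$. Once this is in hand, the Taylor expansion and the separation of variables are essentially mechanical. I would make sure to state the rank estimate in terms of the target precision $\epsilon$ and to note that the bound is uniform over all choices of $i$ and $j$, so the same polylogarithmic rank serves every block $A_{ij}$.
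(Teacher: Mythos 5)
Your proof is correct and follows essentially the same route as the paper: the same centered phase decomposition isolating the cross term $\exp(i\omega((s-r)-(s_i-r_i))\cdot(x-x_j))$, the same $O(1)$ bound on its phase from the $1/\sqrt{\omega}$ side-lengths, and a low-rank approximation of the resulting non-oscillatory factor. The only (immaterial) difference is that you conclude via truncated Taylor expansion plus the binomial theorem, giving an explicit $O(\log^2(1/\epsilon))$ rank bound, whereas the paper invokes Chebyshev interpolation in $(r,s)$ and $x$.
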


\begin{proof}
The proof of this theorem follows the same line of argument in
\cite{candes2009fast,ying2009sparse,li2017interpolative} and below we outline the key idea.  Denote
the center of $D_i$ by $(r_i,s_i)$ and the center of $X_j$ by $x_j$. For each $(r,s)\in D_i$ and
$x\in X_j$, we write
\begin{multline}
  \label{centered factorization}
  \exp(i\omega(s-r)\cdot x) = \exp(i\omega((s-r) - (s_i-r_i))\cdot (x-x_j)) \cdot \cr
  \exp(i\omega(s_i-r_i)\cdot x)\cdot \exp(i \omega (s-r)\cdot x_j)\cdot \exp(-i\omega (s_i-r_i)\cdot x_j).
\end{multline}
Note that for fixed $D_i$ and $X_j$ each of the last three terms is either a constant or depends
only on $x$ or $(r,s)$. As a result, $\exp(i\omega(s-r)\cdot x) $ is numerically low-rank if and
only if the first term $\exp(i\omega((s-r) - (s_i-r_i))\cdot (x-x_j))$ is so. Such a low-rank
property can be derived from the conditions concerning the side-lengths of $D_i$ and $X_j$. More
precisely, since $(r,s)$ resides in $D_i$ with center $(r_i,s_i)$, then
\begin{equation}
  \vert (s-r) - (s_i-r_i) \vert \leq \frac{1}{\sqrt{\omega}}.
\end{equation}
Similarly as $x$ resides in $X_j$ with center $x_j$, then
\begin{equation}
  \vert x - x_j\vert \leq \frac{1}{\sqrt{\omega}}.
\end{equation}
Multiplying these two estimates results in the estimate
\begin{equation}
  \omega \vert ((s-r) - (s_i-r_i))\cdot (x-x_j))\vert \leq 1
\end{equation}
for the phase of $\exp(i\omega((s-r) - (s_i-r_i))\cdot (x-x_j))$. Therefore,
\begin{equation}
  \exp(i\omega((s-r) - (s_i-r_i))\cdot (x-x_j))
\end{equation}
for $(r,s)\in D_i$ or $x\in X_j$ is non-oscillatory and hence can be approximated effectively by
applying, for example, Chebyshev interpolation in both the $(r,s)$ and $x$ variables. Since the
degree of the Chebyshev polynomials only increases poly-logarithmically with respect to the desired
accuracy, $\exp(i\omega((s-r) - (s_i-r_i))\cdot (x-x_j))$ is numerically low-rank by
construction. This proves that the submatrix $A_{ij}$ defined in \eqref{Aij} is also numerically
low-rank.
\end{proof}

\subsection{Matrix factorization} \label{section: fac}

In this subsection, we show that Theorem \ref{theorem: low-rank} guarantees a {\em low-complexity}
factorization of the matrix $A$. Let the row and column indices of $A\in\mathbb{C}^{M^2 \times N^2}$
be partitioned into index sets $\{D_i\}_{i=0}^{P_D-1}$ and $\{X_j\}_{j=0}^{P_X-1}$, respectively, as
in Theorem \ref{theorem: low-rank}. To simplify the presentation, we assume $P_X=P_D=P$, $\vert X_0
\vert = \cdots \vert X_{P-1} \vert = N^2/P$, and $\vert D_0\vert=\cdots=\vert D_{P-1}\vert=M^2/P$.

Since the submatrix
\[
A_{ij} := [A(rs,x)]_{rs\in D_i, x\in X_j}
\]
is numerically low-rank, assume that 
\begin{equation}\label{AUV}
  A_{ij}\approx U_{ij} V_{ij}^*,
\end{equation} 
where $U_{ij}\in \bbC^{M^2/P\times t}$ and $V_{ij}\in \bbC^{N^2/P\times t}$. Here $t$ can be taken
to be the maximum of the numerical ranks of all submatrices $A_{ij}$. Theorem \ref{theorem:
  low-rank} implies that $t$ is a small constant.

By applying \eqref{AUV} to each block $A_{ij}$, $A$ can be approximated by
\begin{equation}
  \begin{bmatrix}
    U_{00} V_{00}^* & U_{01} V_{01}^* & \cdots & U_{0(P-1)} V_{0(P-1)}^* \\ 
    U_{10} V_{10}^* & U_{11} V_{11}^* & \cdots & U_{1(P-1)} V_{1(P-1)}^* \\ 
    \vdots & \ & \ddots & \vdots \\
    U_{(P-1)0} V_{(P-1)0}^* & U_{(P-1)1} V_{(P-1)1}^*  & \cdots & U_{(P-1)(P-1)} V_{(P-1)(P-1)}^* \\
  \end{bmatrix}.
  \label{butterfly mid}
\end{equation}
The next step is to write \eqref{butterfly mid} into a factorized form. First, introduce $U_i$ and
$V_j$
\begin{equation}
  \label{UV def}
  U_i = \begin{bmatrix} U_{i0}, U_{i1}, \cdots, U_{i(P-1)} \end{bmatrix}\in \mathbb{C}^{M^2/P \times
    tP},\quad V_j = \begin{bmatrix} V_{0j}, V_{1j}, \cdots, V_{(P-1)j} \end{bmatrix}\in
  \mathbb{C}^{N^2/P \times tP},
\end{equation}
and define in addition
\begin{equation}\label{UV defmore}
  U = \begin{bmatrix} U_0 & & & \\ & U_1 & & \\ & & \ddots & \\ & & & U_{P-1}
  \end{bmatrix}\in\mathbb{C}^{M^2 \times P^2t},\quad
  V^* = \begin{bmatrix} {V_0}^* & & & \\ & {V_1}^* & & \\ & & \ddots & \\ & & & {V_{P-1}}^*
  \end{bmatrix}\in\mathbb{C}^{P^2t\times N^2},
\end{equation}
In addition, introduce 
\begin{eqnarray}
  \label{S def}
  \Sigma = 
  \begin{bmatrix}
    \Sigma_{00} & \Sigma_{01} & \cdots & \Sigma_{0(P-1)} \\ 
    \Sigma_{10} & \Sigma_{11} & \cdots & \Sigma_{1(P-1)} \\ 
    \vdots & \ & \ddots & \vdots \\
    \Sigma_{(P-1)0} & \Sigma_{(P-1)1}  & \cdots & \Sigma_{(P-1)(P-1)}\\
  \end{bmatrix}\in\mathbb{C}^{P^2t \times P^2t},
\end{eqnarray}
where the submatrix $\Sigma_{ij} \in \mathbb{C}^{Pt\times Pt}$ itself is a $P\times P$ block matrix with
blocks of size $t\times t$. $\Sigma_{ij}$ is defined to be zero everywhere except being the identity
matrix at the $(j,i)$-th $t \times t$ block. In order to help understand the NN architecture
discussed below sections, it is imperative to understand the meaning of $\Sigma$. Let us assume for
simplicity that $t=1$. Then for an arbitrary vector $z\in \mathbb{C}^{P^2}$, $\Sigma z$ essentially
performs a ``switch'' that shuffles $z$ as follows
\begin{equation}
  (\Sigma z)(jP+i) = z(iP+j),\quad i,j=0,\ldots,P-1.
\end{equation} 

With the above definitions for $U$, $V$, and $\Sigma$, the approximation in \eqref{butterfly mid} can be
written compactly as
\begin{equation} \label{AUSV}
  A \approx U \Sigma V^*.
\end{equation}
Notice that although $A$ has $M^2 \times N^2$ entries, using the factorization \eqref{AUSV}, $A$ can
be stored using $tP(M^2 + P + N^2)$ entries. In this paper, $P \approx \max(M,N)$ and $M$ and $N$
are typically on the same order. Therefore, instead of $O(N^4)$, one only needs $O(N^3)$ entries to
parameterize the map $A$ approximately using \eqref{AUSV}. Such a factorization is also used in
\cite{li2015butterfly} for the compression of Fourier integral operators.

We would like to comment on another property that may lead to further reduction in the parameters
used for approximating $A$. Let us focus on any two submatrices $A_{ij}$ and $A_{ik}$ of $A$. For
two regions $X_j$ and $X_k$, where the center of $X_j$ and $X_k$ are $x_{j}$ and $x_{k}$
respectively, $X_k=X_j + (x_{k}-x_{j})$. Let $(r,s)\in D_i$. For $x\in X_j$ and $x' = x +
(x_{k}-x_{j})\in X_k$, we have
\begin{eqnarray}
  \exp(i\omega(s-r)\cdot x)  &=&  g_1(r,s) h((r,s),x),\cr
  \exp(i\omega(s-r)\cdot x') &=&  g_2(r,s) h((r,s),x),
\end{eqnarray}
where 
\begin{equation}
  g_1(r,s) = \exp(i\omega(s-r)\cdot x_{j}),\quad
  g_2(r,s) = \exp(i\omega(s-r)\cdot x_{k}),\quad
  h((r,s),x) = \exp(i\omega(s-r)\cdot (x-x_{j})).
\end{equation}
Therefore, the low-rank factorizations of $A_{ij}$ and $A_{ik}$ are solely determined by the
factorization of $h(rs,x)$. This implies that it is possible to construct low-rank factorizations
for $A_{ij}$ and $A_{ik}$:
\begin{equation}
  A_{ij} \approx U_{ij} V^*_{ij},\quad
  A_{ik} \approx U_{ik} V^*_{ik},
\end{equation}
such that $V^*_{ij} = V^*_{ik}$. Since this is true for all possible $j,k$, one can pick low-rank
factorizations so that $V_0=V_1=\cdots=V_{P-1}$.

As a final remark in this section, this low complexity factorization \eqref{AUSV} for $A$ can be
easily converted to one for $A^*$ since
\begin{equation}\label{AtVSU}
  A^* \approx V \Sigma^* U^*,
\end{equation}
where $U,\Sigma,V$ are provided in \eqref{UV def}, \eqref{UV defmore}, and \eqref{S def}.

\subsection{Neural networks} \label{section: switchnet}

Based on the low-rank property of $A$ in Section \ref{section: FIO} and its low-complexity
factorization in Section \ref{section: fac}, we propose new NN architectures for representing the
inverse map $d \rightarrow \eta$ and the forward map $\eta \rightarrow d$.

\subsubsection{NN for the inverse map $d\rightarrow \eta$.}
As pointed out earlier, $d \approx A \eta$ when $\eta$ is sufficiently small. The usual filtered
back-projection algorithm \cite{Natterer2001} solves the inverse problem $d\rightarrow \eta$ via
\begin{equation}
  \label{adjoint and deconv}
  \eta \approx (A^* A + \eps I)^{-1} A^* d,
\end{equation}
where $\eps$ is the regularization parameter. In the far field pattern problem, $(A^* A + \eps
I)^{-1} $ can be understood as a deconvolution operator. To see this, a direct calculation reveals
that
\begin{eqnarray}
  \label{full kernel}
  (A^*A)(x,y) = \sum_{rs\in R\times S} e^{i\omega(s-r)\cdot y} e^{-i\omega(s-r)x}
  = \sum_{rs\in R\times S} e^{-i\omega(s-r)(x-y)}
\end{eqnarray}
for $x,y\in X$. \eqref{full kernel} shows that $A^* A$ is a translation-invariant convolution
operator. Therefore, the operator $(A^* A + \eps I)^{-1}$, as a regularized inverse of $A^* A$,
simply performs a deconvolution. In summary, the above discussion shows that in order to obtain
$\eta$ from the scattering pattern $d$ in the regime of small $\eta$, one simply needs to apply
sequentially to $d$
\begin{itemize} 
\item the operator $A^*$,
\item a translation-invariant filter that performs the deconvolution $(A^* A+\eps I)^{-1}$.
\end{itemize}


Although these two steps might be sufficient when $\eta$ is small, a nonlinear solution is needed
when $\eta$ is not so. For this purpose, we propose a nonlinear neural network {\em SwitchNet} for
the inverse map. There are two key ingredients in the design of SwitchNet.
\begin{itemize}
\item The first key step is the inclusion of a \texttt{Switch} layer that sends local information
  globally, as depicted in Figure \ref{figure: switch}.  The structure of the \texttt{Switch} layer
  is designed to mimic the matrix-vector multiplication of the operator $A^* \approx V \Sigma^* U^*$
  in \eqref{butterfly mid}. However unlike the fixed coefficients in \eqref{butterfly mid}, as an NN
  layer, the \texttt{Switch} layer allows for tunable coefficients and learns the right values for
  the coefficients from the training data. This gives the architecture a great deal of flexibility.
  
\item The second key step is to replace the linear deconvolution in the back-projection algorithm
  with a few convolution (\texttt{Conv}) layers. This enriches the architecture with nonlinear
  capabilities when approximating the nonlinear inverse map.
\end{itemize}

\begin{algorithm}[h]
  \caption{SwitchNet for the inverse map $d\rightarrow \eta$ of far field
    pattern.}\label{algorithm: inverse}
  \begin{algorithmic}[1]
	\Require $t,P_D,P_X,N,w,\alpha,L,d\in \mathbb{C}^{M\times M}$
	\Ensure $\eta\in \mathbb{C}^{N\times N}$
	\State $d_1\leftarrow  \texttt{Vect}[P_D](d)$
	\State $d_2\leftarrow  \texttt{Switch}[t,P_D,P_X,N^2](d_1)$
	\State $e_0\leftarrow \texttt{Square}[P_X](d_2)$
	\For  {$\ell$ from $0$ to $L-1$}
	\State  $e_{\ell+1}\leftarrow \texttt{Conv}[w,\alpha](e_{\ell})$
	\EndFor
	\State $\eta \leftarrow \texttt{Conv}[w,1](e_L)$
	\State \Return $\eta$
  \end{algorithmic}
\end{algorithm}

The pseudo-code for SwitchNet is summarized in Algorithm \ref{algorithm: inverse}. The input $d$ is
a $\bbC^{M\times M}$ matrix, while the output $\eta$ is a $\bbC^{N\times N}$ matrix. The first three
steps of Algorithm \ref{algorithm: inverse} mimics the application of the operator $A^* \approx V
\Sigma^* U^*$. The \texttt{Switch} layer does most of the work, while the \texttt{Vect} and
\texttt{Square} layers are simply operations that reshape the input and output data to the correct
matrix form at the beginning and the end of the \texttt{Switch} layer. In particular, \texttt{Vect}
groups the entries of the 2D field $d$ according to squares defined by the partition
$D_0,\ldots,D_{P_D-1}$ and \texttt{Square} does the opposite. The remaining lines of Algorithm
\ref{algorithm: inverse} simply apply the \texttt{Conv} layers with window size $w$ and channel
number $\alpha$.

These basic building blocks of SwitchNet are detailed in the following subsection. We also take the
opportunity to include the details of the pointwise multiplication \texttt{PM} layer that will be
used in later on.

\subsubsection{Layers for SwitchNet}\label{section: layer}
In this section we provide the details for the layers that are used in SwitchNet. Henceforth, we
assume that the entries of a tensor is enumerated in the Python convention, i.e., going through the
dimensions from the last one to the first. One operation that will be used often is a {\em reshape},
in which a tensor is changed to a different shape with the same number of entries and with the
enumeration order of the entries kept unchanged.

\begin{figure}[h]
  \centering \includegraphics[width=0.4\textwidth]{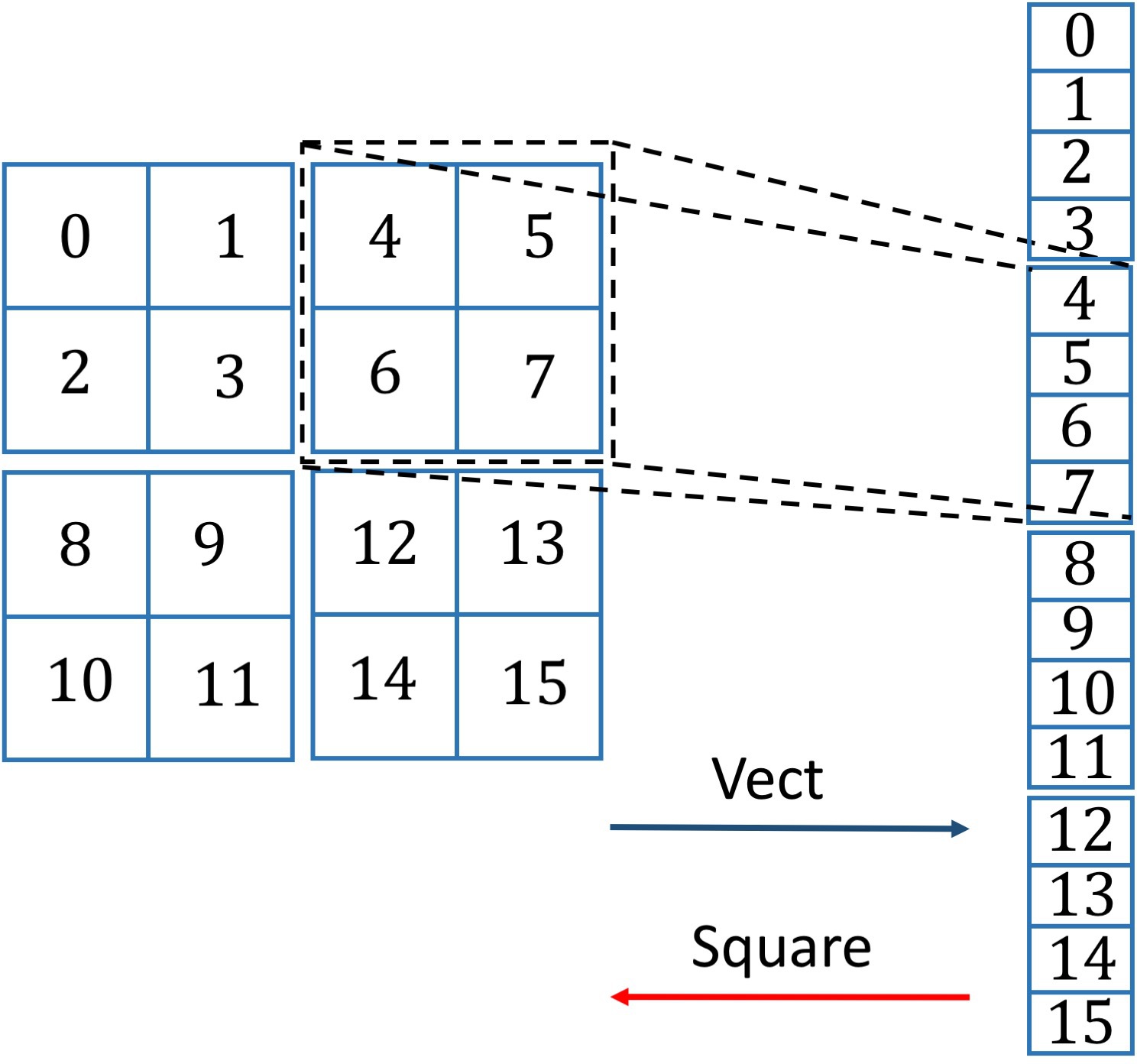}
  \caption{An illustration of the \texttt{Vect} and \texttt{Square} layers. The detail descriptions
    of the layers are provided in Section \ref{section: layer}. For the purpose of illustration we let $P=4$. The
    \texttt{Vect} layer vectorize a $4\times 4$ matrix on the left hand side according to the
    partitioning by $2\times 2$ blocks, to give the size 16 vector on the right hand side. The
    \texttt{Square} layer is simply the adjoint map of the \texttt{Vect} layer.  }\label{figure:
    vectorize}
\end{figure}

{\bf Vectorize layer.} $z_\out = \texttt{Vect}[P](z_\inp)$ with input $z_\inp \in \bbC^{n\times
  n}$. Henceforth we assume that $\sqrt{P}$ is an integer and $\sqrt{P}$ divides $n$. This operation
partitions $z_\inp$ into $\sqrt{P}\times\sqrt{P}$ square sub-blocks of equal size. Then each
sub-block is vectorized, and the vectorized sub-blocks are stacked together as a vector in
$\bbC^{n^2}$. Intuitively, these operations cluster the nearby entries in a sub-block together. The
details of the \texttt{Vect} layer are given in the following: \label{item: vect}
\begin{itemize}
\item Reshape the $z_\inp$ to a ${\sqrt P}\times\frac{n}{\sqrt P}\times{\sqrt P}\times\frac{n}{\sqrt P}$ tensor.
\item Swap the second and the third dimensions to get a
  ${\sqrt P}\times {\sqrt P}\times\frac{n}{\sqrt P}\times\frac{n}{\sqrt P}$ tensor.
\item Reshape the result to an $n^2$ vector and set it to $z_\out$.
  
\end{itemize}

{\bf Square layer.} $z_\out = \texttt{Square}[P](z_\inp)$ with input $z_\inp \in \bbC^{n^2}$, where
$\sqrt P$ is an integer. The output is $z_\out \in \mathbb{R}^{n\times n}$.  Essentially as the adjoint
operator of the $\texttt{Vect}$ layer, this layer fills up each square sub-block of the matrix
$z_\out$ with a segment of entries in $z_\inp$. The details are given as followed:
\begin{itemize}
\item Reshape the $z_\inp$ to a ${\sqrt P}\times {\sqrt P}\times\frac{n}{\sqrt P}\times\frac{n}{\sqrt P}$ tensor.
\item Swap the second and the third dimensions to get a
  ${\sqrt P}\times\frac{n}{\sqrt P}\times{\sqrt P}\times\frac{n}{\sqrt P}$ tensor.
\item Reshape the result to an $n\times n$ matrix and set it to $z_\out$.
\end{itemize}

\begin{figure}[h]
  \centering
  \includegraphics[width=0.7\textwidth]{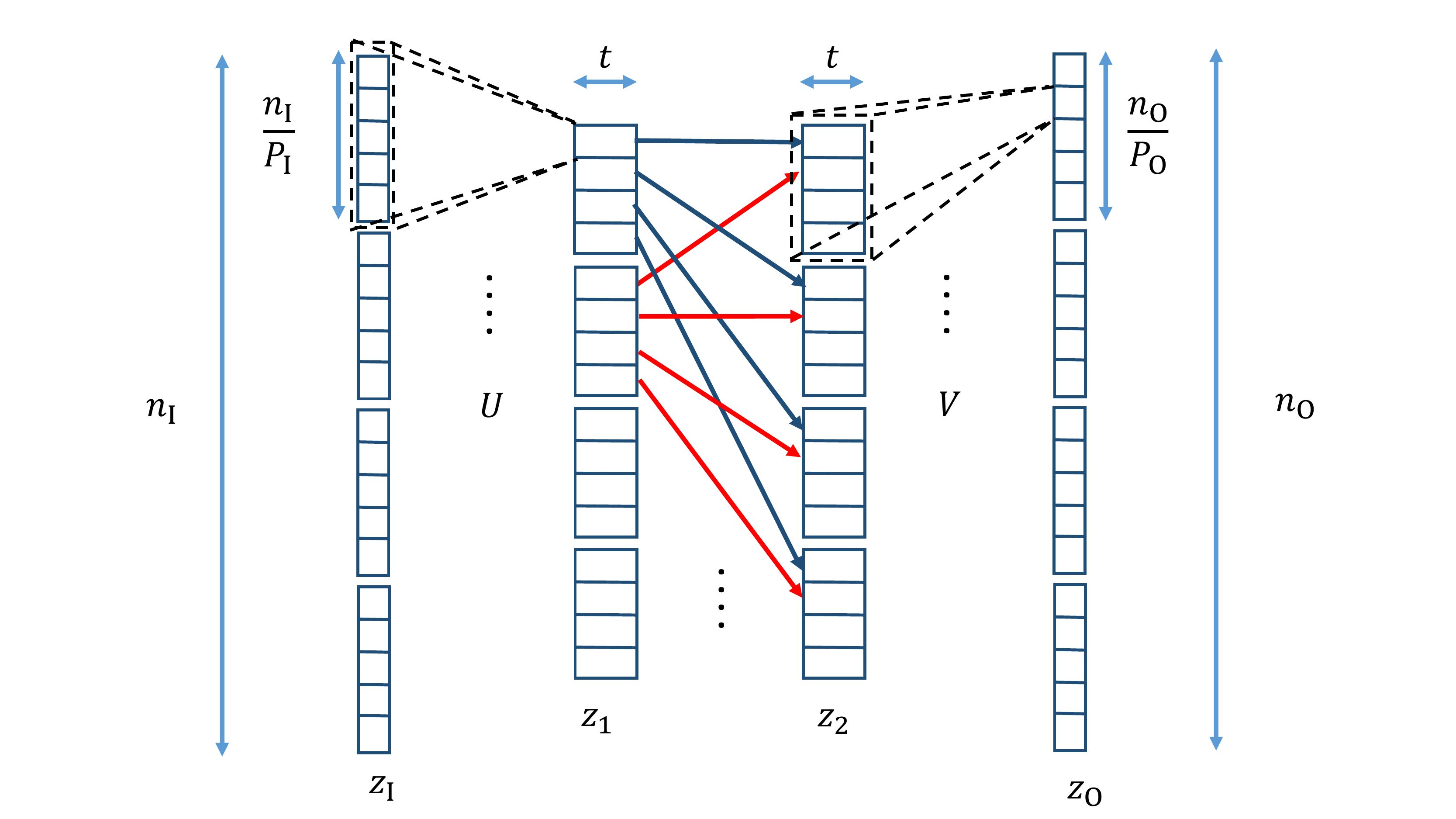}
  \caption{An illustration of the \texttt{Switch} layer where the detail description of it is
    provided in Section \ref{section: layer}. For the purpose of illustration we let
    $n_\inp=n_\out=20,P_\inp=P_\out=4$.}\label{figure: switch}
\end{figure}

{\bf Switch layer.} $z_\out = \texttt{Switch}[t,P_\inp,P_\out,n_\out](z_\inp)$ with input
$z_\inp\in\bbC^{n_\inp}$. It is assumed that $n_\inp$ and $n_\out$ are integer multiples of $P_\inp$
and $P_\out$, respectively. This layer consists the following steps.
\begin{itemize}
\item
  Apply $U^T$ to $z_\inp$:
  \begin{gather*}
	z_1 = U^T z_\inp \in \bbC^{P_\out P_\inp t},\\
	U^T = \begin{bmatrix} U_0^T & & \\ & \ddots &  \\ & & U_{P_\inp-1}^T \end{bmatrix},
    \quad U_0^T,\ldots,U_{P_\inp-1}^T \in \bbC^{ tP_\out\times \frac{n_\inp}{P_\inp}}
  \end{gather*}
\item 
  Reshape $z_1$ to be a $\bbC^{P_\out \times P_\inp \times t}$ tensor. Here we follow the Python
  convention of going through the dimensions from the last one to the first one.  Then a permutation
  is applied to swap the first two dimensions to obtain a tensor of size $\bbC^{P_\out \times
    P_\inp\times t}$.  Finally, the result is reshaped to a vector $z_2\in\bbC^{P_\inp
    P_\out t}$ again going through the dimensions from the last to the first.
\item
  Apply $V$ to $ z_2$:
  \begin{gather*}
	z_\out = V  z_2 \in \mathbb{R}^{n_\out},\cr
	V = \begin{bmatrix} {V_0} & & \\ & \ddots &  \\ & & {V_{P_\out-1}} \end{bmatrix},\quad
    V_0,\ldots,{V_{P_\out-1}} \in \bbC^{ \frac{n_\out}{P_\out} \times tP_\inp}.
  \end{gather*}
  Here the non-zero entries of $U, V$ are the trainable parameters. The Switch layer is illustrated
  in Figure \ref{figure: switch}.
\end{itemize}

{\bf Convolution layer.} $z_\out = \texttt{Conv}[w,c_\out](z_\inp)$ with input $z_\inp =\bbC^{n
  \times n\times c_\inp}$. Here $c_\inp, c_\out$ denote the input and output channel numbers and $w$
denotes the window size. In this paper we only use the convolution layer with stride 1 and with
zero-padding:
\begin{eqnarray}
  z_\out(k_1,k_2,k_3) &=&\text{ReLU}\Bigg(\sum_{l_1=\max(0,k_1-\frac{w-1}{2})}^{\min(n-1,k_1+\frac{w-1}{2})}\sum_{l_2=\max(0,k_2-\frac{w-1}{2})}^{\min(n-1,k_2+\frac{w-1}{2})} \sum_{l_3=0}^{c_\inp-1} \cr 
  &\  &\qquad   W\left(l_1-k_1+\frac{w-1}{2},l_2-k_2+\frac{w-1}{2},l_3,k_3\right) z_\inp(l_1,l_2,l_3)+b(k_3) \Bigg)
\end{eqnarray}
with $k_1,k_2 = 0,\ldots,n-1,\ k_3= 0,\ldots,c_\out-1$. Here $\text{ReLU}(x) = \max(0,x)$ and $w$ is
assumed to be odd in the presentation. Both $W\in \bbC^{w\times w \times c_\inp\times c_\out\ }$ and
$b\in\bbC^{c_\out}$ are the trainable parameters.

{\bf Pointwise multiplication layer.} $z_\out = \texttt{PM}(z_\inp)$ with input $z_\inp\in
\bbC^{n\times n\times c_\inp}$. It is defined as
\begin{equation} \label{item: lc}
  z_\out(k_1,k_2) = W(k_1,k_2) z_\inp(k_1,k_2) +b(k_1,k_2),  
\end{equation}
$k_1,k_2=0,\ldots,n-1$. Both $W\in \bbC^{n \times n}$ and $b\in\bbC^{n\times n}$ are trainable
parameters.

We remark that, among these layers, the \texttt{Switch} layer has the most parameters. If the input
and output to the \texttt{Switch} layer both have size $n\times n$, the number of parameter is
$2tPn^2$ where $P$ is the number of squares that partition the input field and $t$ is the rank of
the low-rank approximation.

\subsubsection{NN for the forward map $\eta \rightarrow d$.}\label{section: NNf1}


We move on to discuss the parameterization of the forward map $\eta \rightarrow d$. The proposal is
based on the simple observation that {\em the inverse of the inverse map is the forward map.}

More precisely, we simply reverse the architecture of the inverse map proposed in Algorithm
\ref{algorithm: inverse}. This results in an NN presented Algorithm \ref{algorithm: forward}. The
basic architecture of this NN involves applying a few layers of \texttt{Conv} first, then followed
by a \texttt{Switch} layer that mimics $A \approx U \Sigma V^*$.

\begin{algorithm}
  \caption{SwitchNet for the forward map $\eta \rightarrow d$ of far field pattern.}
  \label{algorithm: forward}
	\begin{algorithmic}[1]
		\Require $t,P_D, P_X, M, w, \alpha, L, \eta\in \mathbb{R}^{N\times N}$
		\Ensure $d\in \mathbb{R}^{M\times M\times 2}$
		\State $\eta_0 \leftarrow \eta$
		\For  {$\ell$ from $0$ to $L-1$}
		\State  $\eta_{\ell+1}\leftarrow \texttt{Conv}[w,\alpha](\eta_{\ell})$
		\EndFor
		\State $d_1\leftarrow \texttt{Conv}[w,1](\eta_{L})$
		\State $d_2\leftarrow \texttt{Vect}[P_X](d_1)$
		\State $d_3\leftarrow \texttt{Switch}[t,P_X, P_D, M^2](d_2)$
		\State $d \leftarrow \texttt{Square}[P_D](d_3)$
		\State \Return $d$
	\end{algorithmic}
\end{algorithm}

We would also like to mention yet another possibility to parameterize the forward map $\eta
\rightarrow d$, via a recurrent neural network \cite{mikolov2010recurrent}. Let
\begin{equation}
  E_\eff = E +  E G_0 E + E G_0 E G_0 E + \cdots =: E_1 + E_2 + E_3 + \cdots.
\end{equation}
One can leverage the following recursion 
\begin{equation}
  E_{k+1} = E G_0 E_{k} \quad k=1,2,\ldots K
\end{equation}
to approximate $E_\eff$ by treating each $E_k$ as an $N^2 \times N^2$ image and using a recurrent
neural network. At the $k$-th level of the recurrent neural network, it takes $E_k$ and $E$ as
inputs and outputs $E_{k+1}$. More specifically, in order to go from $E_k$ to $E_{k+1}$, one first
apply $G_0$ to each column of the image $E_k$, then each row of the image is reweighted by the
diagonal matrix $E$. Stopping at the $K$-th level for a sufficiently large $K$, $E_\eff$ can be
approximated by 
\begin{equation}
  E_\eff \approx \sum_{i=1}^{K+1} E_i.
\end{equation}
Once holding such an approximation to $E_\eff$, we plug it into \eqref{perturbation series}
\begin{equation}
  \label{double conv}
  d(r,s) = \sum_{x\in X} \sum_{y\in X} e^{i\omega r\cdot x} E_\eff(x,y) e^{-i\omega s\cdot y} =
  \sum_{x\in X} e^{i\omega r\cdot x} \left(\sum_{y\in X} E_\eff(x,y) e^{-i\omega s\cdot y} \right).
\end{equation}
This shows that the map from $E_\eff$ to $d$ can be realized by applying a matrix product to
$E_\eff$ first on the $y$-dimension, then on the $x$-dimension. If we view applying the Green's
function $G_0$ as applying a convolution layer in an NN, the above discussion shows that the forward
map can be obtained by first applying a recurrent NN followed by a convolutional NN. The main
drawback of this approach is the large memory requirement (i.e., $N^2 \times N^2$) to store each
individual $E_k$. In addition, the use of a recurrent NN may lead to difficulty in training
\cite{pascanu2013difficulty} due to the issue of exploding or vanishing gradient. Moreover, since
the weights for parameterizing $G_0$ are shared over multiple layers in the recurrent NN, one might
not be able to efficiently use back-propagation, which may lead to a longer training time.  These
are the main reasons why we decided to adopt the approach in Algorithm \ref{algorithm: forward}.

\subsection{Numerical results}\label{section: scat numeric}

In this section, we present numerical results of SwitchNet for far field pattern at a frequency
$\omega\approx 60$. The scatterer field $\eta(x)$ supported in $\Omega = [-0.5,0.5]^2$ is assumed to
be a mixture of Gaussians
\begin{equation}
  \label{gmm}
  \sum_{i=1}^{n_s}  \beta \exp\left(-\frac{\vert x-c_i \vert^2}{2\sigma^2} \right)
\end{equation}
where $\beta = 0.2$ and $\sigma = 0.015$. When preparing the training and testing examples, the
centers $\{c_i\}_{i=1}^{n_s}$ of the Gaussians are chosen to be uniformly distributed within
$\Omega$. The number $n_s$ of the Gaussians in the mixture is set to vary between $2$ and $4$.  In
the numerical experiments, the domain $\Omega = [-0.5,0.5]^2$ is discretized by an $80\times 80$
Cartesian grid $X$. To discretize the source and receiver directions, we set both $R$ and $S$ to be
a set of 80 equally spaced unit directions on $\mathbb{S}^1$. Therefore in this example, $N=M=80$.

In Algorithm \ref{algorithm: inverse}, the parameters are specified as $t=3$ (rank of the low-rank
approximation), $P_X = 8^2$, $P_D = 4^2$, $w=10$ (window size of the convolution layers), $\alpha =
18$ (channel number of the convolution layers), and $L=3$ (number of convolution layers), resulting
3100K number of parameters. The parameters for Algorithm \ref{algorithm: forward} are chosen to be
$t=4$, $P_X=8^2$, $P_D = 4^2$, $w=10$, $\alpha = 24$, and $L=3$, with a total of 4200K
parameters. Note that for both algorithms the number of parameters is significantly less than the
one of a fully connected NN, which has at least $80^4 = 40960$K parameters.

SwitchNet is trained with the ADAM optimizer \cite{kingma2014adam} in Keras \cite{chollet2017keras}
with a step size of 0.002 and a mini-batch size of size 200. The optimization is run for 2500
epochs. Both the training and testing data sets are obtained by numerically solving the forward
scattering problem with an accurate finite difference scheme with a perfectly matched layer. In the
experiment, 12.5K pairs of $(\eta,d)$ are used for training, and another 12.5K pairs are reserved
for testing. The errors are reported using the mean relative errors
\begin{equation}
  \frac{1}{N_\test}\sum_{i=1}^{N_\test} \frac{\| d^\NN_i - {d}_i \|_F}{\| {d}_i \|_F },\quad
  \frac{1}{N_\test}\sum_{i=1}^{N_\test} \frac{\| \eta^\NN_i - {\eta}_i \|_F}{\| {\eta}_i \|_F },
\end{equation}
where $d^\NN_i$ and $d_i$ denote the predicted and ground truth scattering patterns respectively for
the $i$-th testing sample, and $\eta^\NN_i$ and $\eta_i$ denote the predicted and ground truth
scatterer field respectively. Here $\|\cdot\|_F$ is the Frobenius norm.


Table \ref{table: scattering} summarizes the test errors for Gaussian mixtures with different
choices of $n_s$. For the purpose of illustration, we show the predicted $d$ and $\eta$ by SwitchNet
along with the ground truth in Figure \ref{figure: scat} for one typical test sample.

\begin{table}[!ht]
	\centering 
	\begin{tabular}{c c c } 
		\hline\hline 
		$n_s$& Forward map &  Inverse map\\ [0.5ex] %
		\hline\hline 
		2 & 9.4e-03 & 1.2e-02 \\
		3 & 4.0e-02 & 1.4e-02 \\
		4 & 4.8e-02 &  2.4e-02  \\
		\hline 
	\end{tabular}
	\caption{Prediction error of SwitchNet for the maps $\eta\rightarrow d$ and $d\rightarrow \eta$
      for far field pattern. }\label{table: scattering} 
\end{table}

\begin{figure}[!ht]
  \centering
  \subfloat[Ground truth scattering pattern $d$.]{\includegraphics[width=0.45\columnwidth]{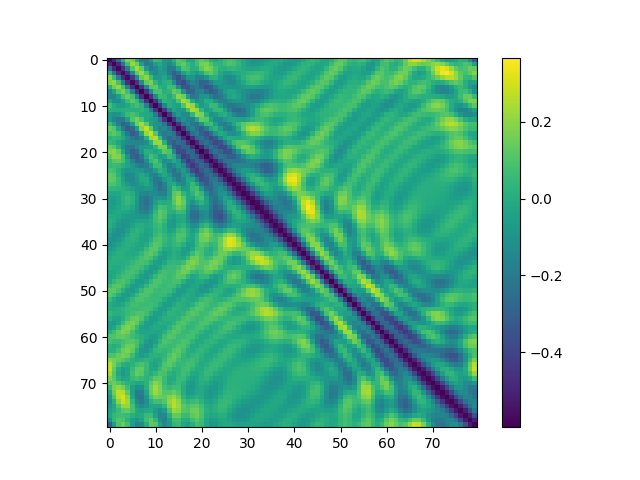}}
  \subfloat[Predicted scattering pattern $d^\NN$.]{\includegraphics[width=0.45\columnwidth]{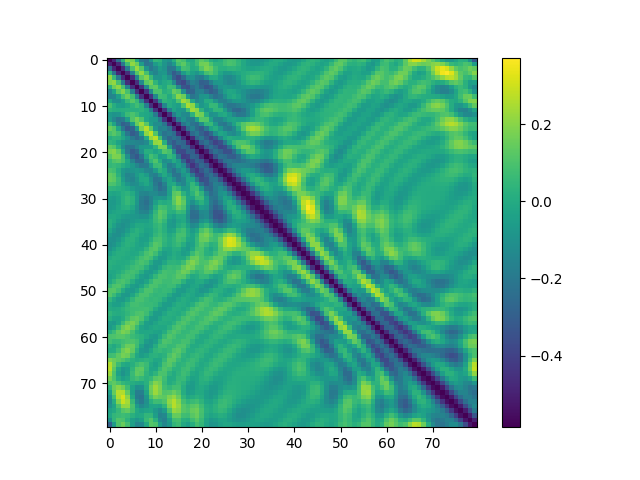}}
  \qquad
  \subfloat[Ground truth scatterers $\eta=\omega^2(1/c^2-1/c_0^2)$.]{\includegraphics[width=0.45\columnwidth]{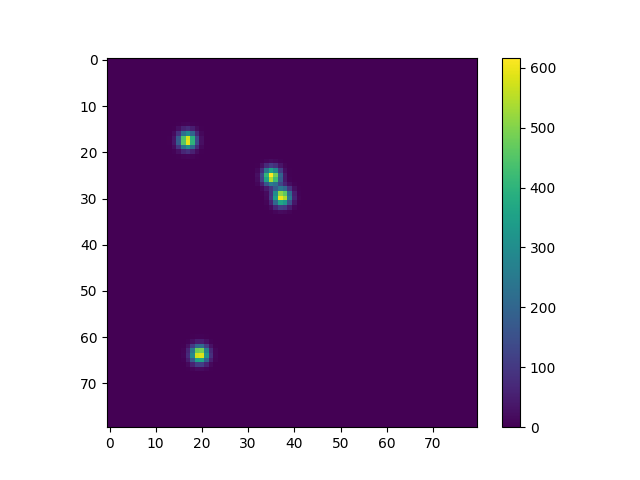}}
  \subfloat[Predicted scatterers $\eta^\NN$.]{\includegraphics[width=0.45\columnwidth]{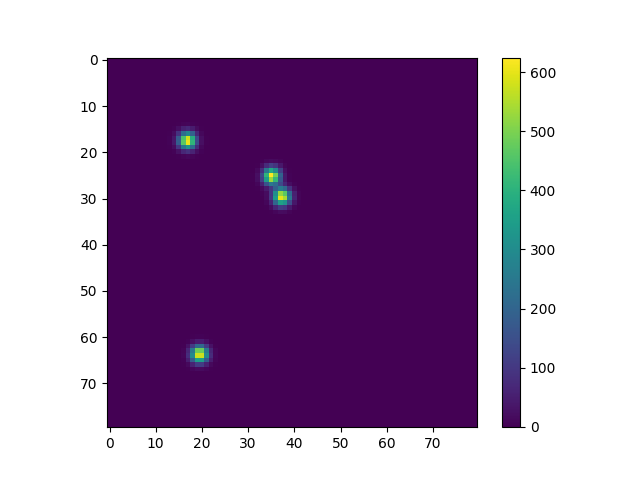}}
  \caption{Results for a typical instance of the far field pattern problem with $n_s=4$. (a) The
    ground truth scattering pattern. (b) The scattering pattern predicted by SwitchNet with a
    4.9e-02 relative error. (c) The ground truth scatterers. (d) The scatterers predicted by
    SwitchNet with a 2.4e-02 relative error.}\label{figure: scat}
\end{figure}

\section{SwitchNet for seismic imaging}\label{section: NN seis}

\subsection{Problem setup}

This section considers a two-dimensional model problem for seismic imaging. The scatterer $\eta(x)$
is again assumed to be supported in a domain $\Omega$ with an $O(1)$ diameter, after appropriate
rescaling. $\Omega$ is discretized with a Cartesian grid $X= \{x\}_{x\in X}$ at the rate of at least
a few point per wavelength. Compared to the source and receiver configurations in Section
\ref{section:FFsetup}, the experiment setup here is simpler.  One can regard both $S=\{s\}_{s\in S}$
and $R=\{r\}_{r\in R}$ to be equal to a set of uniformly sampled points along a horizontal line near
the top surface of the domain. The support of $\eta$ is at a certain distance below the top surface
so that it is well-separated from the sources and the receivers (see Figure \ref{fig:seismic} for an
illustration of this configuration).

\begin{figure}[!ht]
  \centering
  \includegraphics[height=0.25\textwidth]{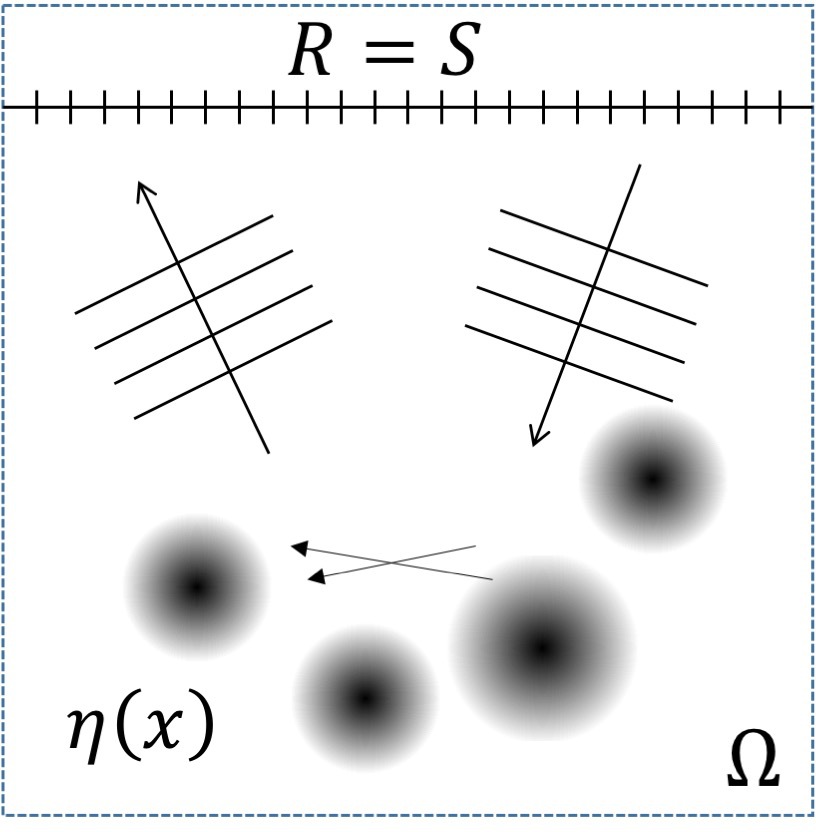}
  \caption{Illustration of a simple seismic imaging setting. The sources ($S$) and receivers ($R$)
    are located near the surface level (top) of the domain $\Omega$. The scatterer field $\eta(x)$ is
    assumed to be well-separated from the sources and the receivers.  }
  \label{fig:seismic}
\end{figure}



The source and receiver operators in \eqref{ddefined} take a particularly simple form. For the
sources, the operator $(G_0 \Pi_S)$ is simply given by sampling:
\[
(G_0 \Pi_S)(x,s) = G_0(x,s).
\]
Similarly for the receivers, the operator $(\Pi_R^T G_0)$ is given by
\[
(\Pi_R G_0)(r,y) = G_0(r,y).
\]
After plugging these two formulas back into \eqref{ddefined}, one arrives at the following
representation of the observation data $d(r,s)$ for $r\in R$ and $s\in S$
\[
d(r,s) = \sum_{x\in X} \sum_{y \in X} G_0(r,x) (E+EG_0E+\cdots)(x,y) G_0(y,s).
\]

\subsection{Low-rank property} 
\label{section: seismic low-rank}

Following the approach taken in Section \ref{section: FIO}, we start with the linear approximation
under the assumption that $\eta(x)$ is weak. Since $E=\text{diag}(\eta)$, the first order
approximation is
\begin{equation}
  d(r,s) \approx \sum_{x\in X} G_0(r,x)G_0(x,s) \eta(x),
\end{equation}
By regarding $\eta$ as a vector in $\mathbb{R}^{N^2}$ and $d$ as a vector $\in \mathbb{C}^{M^2}$,
one obtains the linear system
\begin{equation}
  \label{forward linear map}
  d \approx A\eta,\quad A\in\mathbb{C}^{M^2 \times N^2},
\end{equation}
where the element $A$ at $(r,s)\in R\times S$ and $x\in X$ is given by 
\begin{equation}
  A(rs,x) = G_0(r,x)G_0(x,s) = G_0(r,x)G_0(s,x) .
\end{equation}

Under the assumptions that the sources $S$ and receivers $R$ are well-separated from the support of
$\eta(x)$ and that $c_0(x)$ varies smoothly, the matrix $A$ satisfies a low-rank property similar to
Theorem \ref{theorem: low-rank}. To see this, we again partition $X$ into Cartesian squares
$X_0,\ldots, X_{P_X-1}$ of side-length equal to $1/\sqrt{\omega}$. Since $R=S$ is now the
restriction of $X$ on the surface level, this partition also induces a partitioning for $R\times
S$. When $c_0(x)$ varies smoothly, it is shown (see for example \cite{Engquist2018}) that the
restriction of the matrix $[G_0(r,x)]_{r\in R, x\in X}$ (or $[G_0(s,x)]_{s\in S, x\in X}$) to each
piece of the partitioning is numerically low-rank. Since the matrix $A$ is obtained by taking the
Khatri-Rao product \cite{liu2008hadamard} of $[G_0(r,x)]_{r\in R, x\in X}$, $[G_0(s,x)]_{s\in S,
  x\in X}$, the low-rank property is preserved with the guarantee that the rank at most squares in
the worst case.

By following the same argument in Section \ref{section: fac}, one can show that the matrix $A$ has a
low-complexity matrix factorization $ A \approx U \Sigma V^* $ of exactly the same structure as
\eqref{butterfly mid}. The corresponding factorization for $A^*$ is $A^* \approx V \Sigma^* U^*$.

\subsection{Neural networks}

	
	

Based on the low-rank property in Section \ref{section: seismic low-rank}, we propose here SwitchNet
for seismic imaging. 

\subsubsection{NN for the inverse map $d \rightarrow \eta$}

When the linear approximation is valid (i.e., \eqref{forward linear map} holds) $\eta$ can be
obtained from $d$ via a filtered projection approach (or called migration in the seismic community)
\begin{equation}
  \eta \approx (A^* A + \eps I)^{-1} A^* d,
\end{equation}
where $\eps I$ is a regularizing term. Since $A^*$ has a low-complexity factorization $A^*\approx
V\Sigma^* U^*$, the application $A^*$ to a vector can be represented by a $\texttt{Switch}$ layer.

Concerning the $(A^* A + \eps I)^{-1}$ term, note that
\begin{equation}
  \label{noninvar kernel}
  (A^*A)(x,y) \approx \sum_{rs\in R\times S} \overline{G_0(x,r)G_0(x,s)} G_0(r,y)G_0(s,y),
\end{equation}
which, unlike \eqref{full kernel}, is no longer a translation-invariant kernel as the data gathering
setup is not so. For example, even when the background velocity $c_0(x)=1$, the different terms of
the Green's function $G_0(\cdot)$ in \eqref{noninvar kernel} scale like
\[
\frac{1}{\sqrt{\vert x - r\vert}}, \quad
\frac{1}{\sqrt{\vert x - s\vert}}, \quad
\frac{1}{\sqrt{\vert y - r\vert}}, \quad
\frac{1}{\sqrt{\vert x - s\vert}},
\]
which fail to give a translation-invariant kernel of form $K(x-y)$. As a direct consequence, the
operator $(A^* A + \eps I )^{-1}$ is not translation-invariant either.

In order to capture the loss of translation-invariance, we include an extra pointwise multiplication
layer $\texttt{PM}$ (defined in Section \ref{section: layer}) when dealing with the inverse map. The
pseudo-code of the NN for the inverse map is given in Algorithm \ref{algorithm: inverse2}.



\begin{algorithm}
  \caption{SwitchNet for the inverse map $d\rightarrow \eta$ of seismic imaging.}\label{algorithm:
    inverse2}
  \begin{algorithmic}[1]
	\Require $t,P_D,P_X,N,w,\alpha, L, d\in \bbC^{M\times M}$
	\Ensure $\eta\in \mathbb{R}^{N\times N}$
	\State $d_1\leftarrow  \texttt{Vect}[P_D](d)$
	\State $d_2\leftarrow  \texttt{Switch}[t,P_D,P_X,N^2](d_1)$
	\State $e_0\leftarrow \texttt{Square}[P_X](d_2)$
	\For  {$\ell$ from $0$ to $L-1$}
	\State  $e_{\ell+1}\leftarrow \texttt{Conv}[w,\alpha](e_{\ell})$
	\EndFor
	\State $\eta \leftarrow \texttt{Conv}[w,1](e_{L})$
	\State $\eta \leftarrow \texttt{PM}(\eta)$
	\State \Return $\eta$
  \end{algorithmic}
\end{algorithm}

\subsubsection{NN for the forward map $\eta \rightarrow d$}

As in Section \ref{section: NNf1}, for the forward map from $\eta \rightarrow d$, we simply reverse
the architecture of the NN for the inverse map in Algorithm \ref{algorithm: inverse2}. For
completeness we detail its structure in Algorithm \ref{algorithm: forward2}. The main difference
between Algorithm \ref{algorithm: forward} and Algorithm \ref{algorithm: forward2} is again the
inclusion of an extra pointwise multiplication layer.
\begin{algorithm}
  \caption{SwitchNet for the forward map $\eta\rightarrow d$ of seismic imaging.}\label{algorithm:
    forward2}
	\begin{algorithmic}[1]
		\Require $t,P_D, P_X, M, w,\alpha,L,\eta\in \bbC^{N\times N}$
		\Ensure $d\in \bbC^{M\times M}$
		\State $\eta_0 \leftarrow \texttt{PM}(\eta)$
		\For  {$\ell$ from $0$ to $L-1$}
		\State  $\eta_{\ell+1}\leftarrow \texttt{Conv}[w,\alpha](\eta_{\ell})$
		\EndFor
		\State $d_1 \leftarrow \texttt{Conv}[w,1](\eta_{L})$
		\State $d_2 \leftarrow \texttt{Vect}[P_X](d_1)$
		\State $d_3 \leftarrow \texttt{Switch}[t,P_X, P_D, M^2](d_2)$.
		\State $d \leftarrow \texttt{Square}[P_D](d_3)$
		\State \Return $d$
	\end{algorithmic}
\end{algorithm}

\subsection{Numerical results}

In the numerical experiments, we set $\Omega = [-0.5,0.5]^2$ and discretize it by a $64\times 64$
Cartesian grid. As mentioned before, the sources $S$ and the receivers $R$ are located on a line
near the top surface of $\Omega$, similar to the setting in Fig. \ref{fig:seismic}. This line is
discretized uniformly with $M=80$ points. Therefore, the size of $\eta$ and $d$ are $64\times 64$
and $80\times 80$, respectively. We assume a Gaussian mixture model for $\eta$ as in \eqref{gmm},
where $\beta=0.2,\sigma=0.015$. Unlike before, the centers $\{c_i\}_{i=1}^{n_s}$ are kept away from
the top surface of $\Omega$ in order to ensure that they are well-separated from the sources and
receivers.

In Algorithm \ref{algorithm: inverse2} and Algorithm \ref{algorithm: forward2}, the parameters are
set to be $t=3$, $P_X=8^2$, $P_D = 4^2$, $N=64$, $M=80$, $w=8$, $\alpha=18$, and $L=3$, resulting
NNs with 2900K parameters. The procedure of training the NNs is the same as the one used in Section
\ref{section: scat numeric}. Table \ref{table: seismic} presents the test errors for this model
problem. The predicted and the ground truth $d,\eta$ are visually compared in Figure \ref{figure:
  seismic} for one typical test sample.

\begin{table}[ht]
  \centering 
  \begin{tabular}{c c c } 
	\hline\hline 
	$n_s$& Forward map &  Inverse map\\ [0.5ex] %
	\hline\hline 
	2 & 5.6e-02 & 2.1e-02 \\
	3 & 7.7e-02 & 2.2e-02 \\
		4 & 8.0e-02 &  5.1e-02  \\
		\hline 
  \end{tabular}
  \caption{Prediction error of SwitchNet for the maps $\eta\rightarrow d$ and $d\rightarrow \eta$
    for seismic imaging.  }
  \label{table: seismic}
\end{table}

\begin{figure}[!ht]
  \centering
  \subfloat[Ground truth scattering pattern $d$.]{\includegraphics[width=0.45\columnwidth]{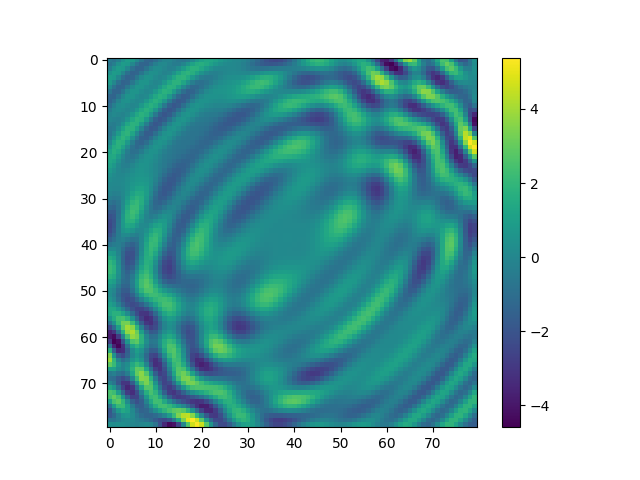}}
  \subfloat[Predicted scattering pattern $d^\NN$.]{\includegraphics[width=0.45\columnwidth]{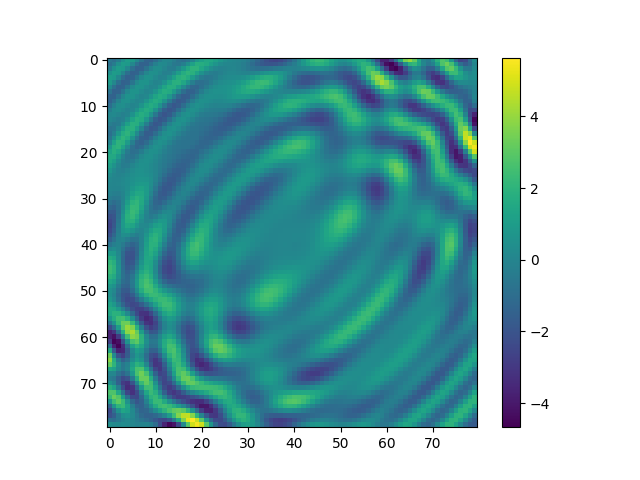}}
  \qquad
  \subfloat[Ground truth scatterers $\eta=\omega^2(1/c^2-1/c_0^2)$.]{\includegraphics[width=0.45\columnwidth]{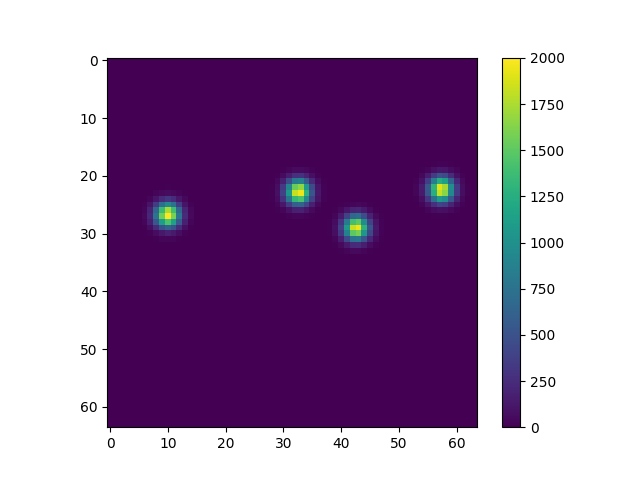}}
  \subfloat[Predicted scatterers $\eta^\NN$.]{\includegraphics[width=0.45\columnwidth]{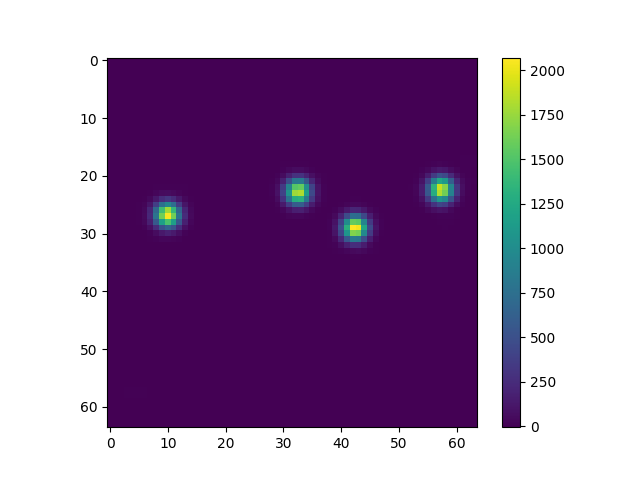}}
  \caption{Results for a typical instance of the seismic imaging setting with $n_s=4$. (a) The
    ground truth scattering pattern. (b) The scattering pattern predicted by SwitchNet with a
    7.7e-02 relative error. (c) The ground truth scatterers. (d) The scatterers predicted by
    SwitchNet with a 6.9e-02 relative error.}\label{figure: seismic}
\end{figure}

\section{Discussion}
In this paper, we introduce a neural network, SwitchNet, for approximating forward and inverse maps
arising from the time-harmonic wave equation. For these maps, local information at the input has a
global impact at the output, therefore they generally require the use of a fully connected NN in
order to parameterize them. Based on certain low-rank property that arises in the linearized
operators, we are able to replace a fully connected NN with the sparse SwitchNet, thus reducing
complexity dramatically. Furthermore, unlike convolutional NNs with local filters, the proposed
SwitchNet connects the input layer with the output layer globally. This enables us to represent
highly oscillatory wave field resulted from scattering problems, and to solve for the associated
inverse problems.


\section*{Acknowledgments}

The work of Y.K. and L.Y. is partially supported by the U.S. Department of Energy, Office of
Science, Office of Advanced Scientific Computing Research, Scientific Discovery through Advanced
Computing (SciDAC) program and the National Science Foundation under award DMS-1818449.  Y.K. thanks
Prof. Emmanuel Cand\`es for the partial support from a Math+X postdoctoral fellowship. This work is
also supported by the GCP Research Credits Program from Google.

\bibliographystyle{abbrv}
\bibliography{bibref}

\end{document}